\documentclass[a4paper,12pt]{amsart}

\usepackage{amsmath}
\usepackage{amssymb,amsbsy,amsmath,amsfonts,amssymb,amscd}
\usepackage{latexsym}
\usepackage{graphics}
\usepackage{color}
\usepackage{longtable}
\usepackage{xy}
\usepackage{array}
\usepackage{color}
\usepackage{comment}
\input xy
\xyoption{all}


\newcommand\sC{{\mathcal C}}
\newcommand\sT{{\mathcal T}}
\newcommand\sD{{\mathcal D}}

\newcommand\sI{{\mathcal I}}

\newcommand\sB{{\mathcal B}}

\newcommand\sX{{\mathcal X}}
\newcommand\sY{{\mathcal Y}}


\newcommand\Lam{\Lambda}
\newcommand\al{\alpha}

\newcommand\Ga{\Gamma}
\newcommand\De{\Delta}
\newcommand\ga{\gamma}

\newcommand{\CC}{\ensuremath{\mathbb{C}}}

\newcommand{\ZZ}{\ensuremath{\mathbb{Z}}}
\newcommand{\QQ}{\ensuremath{\mathbb{Q}}}

\newcommand{\hol}{\ensuremath{\mathcal{O}}}

\newcommand{\PP}{\ensuremath{\mathbb{P}}}

\newcommand{\ra}{\ensuremath{\rightarrow}}

\def\eea{\end{eqnarray*}}
\def\bea{\begin{eqnarray*}}

\newcommand\dual{\mathrel{\raise3pt\hbox{$\underline{\mathrm{\thinspace d
\thinspace}}$}}}
\newcommand\qe{\ifhmode\unskip\nobreak\fi\quad $\Box$}       

\def\BOX{\hfill\lower.5\baselineskip\hbox{$\Box$}}

\newtheorem{theorem}[equation]{Theorem}

\newtheorem{remark}[equation]{Remark}

\newtheorem{defin}[equation]{Definition}

\newtheorem{lemma}[equation]{Lemma}
\newtheorem{example}[equation]{Example}

\newcommand{\sR}{\ensuremath{\mathcal{R}}}

\begin{document}

\title[Orbifolds  Quotients of Tube Domains]{Orbifold Quotients of Symmetric Domains of Tube type}
\author{ Fabrizio  Catanese }
\address{Lehrstuhl Mathematik VIII, Mathematisches Institut der Universit\"{a}t
Bayreuth, NW II, Universit\"{a}tsstr. 30,
95447 Bayreuth,}
 \address{  Korea Institute for Advanced Study, Hoegiro 87, Seoul, 
133-722, South Korea}
\email{Fabrizio.Catanese@uni-bayreuth.de}

\thanks{AMS Classification:  32Q15, 32Q30, 32Q55, 14K99, 14D99, 20H15, 20K35.\\ 
Key words: Symmetric bounded domains, properly discontinuous group actions, Complex orbifolds, Orbifold fundamental groups, Orbifold classifying spaces.\\ }

\date{\today}

\maketitle

Dedicated to  Lucian Badescu, in memoriam, a dear  friend and  a very esteemed colleague.

\begin{abstract}
In this paper we 
characterize  the compact orbifolds, quotients $ X = \sD/\Ga$
of a bounded symmetric domain $\sD$ of tube type by the  action of a discontinuous  group $\Ga$,
as those projective orbifolds with ample canonical divisor possessing a   slope zero tensor of `orbifold type'.
\end{abstract}

\tableofcontents

\section*{Introduction}

Let $M$ be a simply connected complex manifold, and $\Ga$ be a properly discontinuous group of automorphisms
(biholomorphic self maps) of $M$.

Then the quotient complex analytic space $ X = M / \Ga$ is a normal complex space.

  In the case where the action of $\Ga$ is {\bf quasi-free}, namely, $\Ga$ acts freely outside of a closed complex analytic set
 of codimension at least $2$, we just  consider  the normal complex space $X$; but, in the case where the set $\Sigma$ of points $z \in M$ whose stabilizer is nontrivial has codimension $1$, it is convenient to
 replace $X$ by the complex global orbifold $\sX$, consisting of the datum of $X$ and of the irreducible Weil divisors
 $D_i \subset X$,
whose union is the codimension $1$ part of the branch locus $\sB$ of  $ p : M \ra X$ ($\sB = p (\Sigma)$ is the set 
of critical values of $p$):  each  divisor $D_i$ is marked with the integer $m_i$ which is the order of the stabilizer group at a general point of the inverse image of $D_i$.

The more general case where $\Ga'$ is a properly discontinuous group of automorphisms of  any (connected) complex manifold $M'$ reduces
to the previous one by taking $M$ to be the universal covering of $M'$ and letting $\Ga$ be the group of lifts 
 to $M$ of elements of $\Ga'$.

One says that the above global orbifold $\sX$ is {\bf good} if  $\Ga$ admits a finite index normal subgroup 
$\Lambda$ which acts freely (this holds if  $\Lambda$  is torsion free), with quotient a compact complex manifold $ Y = M / \Lambda$:
in this case $ X = Y/G$, where $G$ is the finite quotient group $G : = \Ga / \Lambda$.

Particularly interesting are the cases where $M$ is a contractible domain  $M = \sD \subset \CC^n$:
in this case $Y$ is a classifying space $K(\Lambda, 1)$ for the group $\Lambda$, whereas (see section 1) $\sX$ is an orbifold classifying
space for the orbifold fundamental group $\Ga$ of $\sX$.

The easiest example is the case where $\sD = \CC^n$, $Y$ is a complex torus $Y = \CC^n / \Lambda$, and $\sX$ is a finite quotient
of a complex torus: this case was considered in \cite{cat24}, and can be  classified by simply saying that $\Ga$
is an arbitrary  abstract even  cristallographic group, endowed with a complex structure on $\Lambda \otimes \CC$.

A more difficult case is the case where $\sD \subset \subset \CC^n$ is a bounded symmetric domain
(see \cite{Koba59}, \cite{Helgason2} and also \cite{MokLibro}, \cite{CaDS}). Again we have a
good global orbifold, by virtue of the so-called Selberg's Lemma (\cite{selberg}, \cite{cassels}). 

 In the case where $\Ga$ acts freely, we have a so-called locally-symmetric manifold, and there
 is
 a vast literature devoted to their possible characterizations, some final touch with rather  explicit 
 criteria being contained in our work with Di Scala, \cite{CaDS}, \cite{CaDS2}.
 
 The purpose of this note is to apply the idea, as in \cite{cat24},  to use orbifolds in order to deal
 with the case of a non  free action of $\Ga$.   At least what is easier here is that necessarily 
 $X$ must be projective, and indeed  by \cite{Kodaira},  the canonical divisor $K_Y$
 and the orbifold canonical divisor $$ K_{\sX} := K_X + \hat{D} := K_X +\sum_i \frac{m_i-1}{m_i}  D_i$$ must be ample.

  We have a priori two  options for the assumptions to be made, for instance we can consider the more
  general orbifolds
  introduced in  \cite{isogenous} (see also \cite{cime}, 5.5 and 5.8, and 6.1 of  \cite{topmethods})
  or the more special Deligne-Mostow orbifolds (\cite{d-m} Section 14), locally modelled as quotients of a smooth manifold by a finite group; in the quasi-free case, where all the multiplicities $m_i = 1$, 
the Orbifold fundamental group $\pi_1^{orb}(\sX)$
 is the fundamental group $\pi_1(X^*)$ of the smooth locus $X^*$ of $X$, while in general $\Ga : = \pi_1^{orb}(\sX)$
is a quotient of $\pi_1(X \setminus D)$.

On the differential geometric side, the input is simpler, see \cite{CaFr},  \cite{CaDS},
 in the case where $\sD$ is of tube type, which means that $\sD$ is biholomorphic to a tube domain 
$\sT = V + i \sC$, where $V$ is a real vector space and $ \sC \subset V$ is an open self-dual convex cone containing no lines.

In fact, the main concept in the tube case  is the one of a nontrivial  {\bf slope zero tensor}
\begin{equation}\label{szt}
 0 \neq \psi_Y \in H^0 ( S^{mn} (\Omega^1_Y) ( - m K_Y )),
 \end{equation}
(here $n : = dim (Y)$) which characterizes the locally symmetric manifolds of tube type, together with the property that $K_Y$
is ample.

Now, $\psi$ descends to a meromorphic section  $0 \neq \psi_X$ on $X$ of 
\begin{equation}
  S^{mn} (\Omega^1_X (log D) )  ( - m (K_X + D )).
 \end{equation}
Conversely,  such a tensor $\psi_X$ on $X$ lifts to a holomorphic tensor $\psi_Y$ only if
it is  {\bf orbifold type}, meaning that some vanishing conditions are to be imposed
(see section 2 for precise definitions).
 
Our  first   result is  the following:

 \begin{theorem}\label{easier}
 The global compact complex orbifolds $\sX$ of bounded symmetric  domains $\sD$
 of tube type (i.e., $\sD$ is a product of irreducible bounded symmetric domains of tube type) 
   are the projective complex orbifolds such that:  
   
   (1) their  orbifold fundamental group $\Ga$ admits a torsion free normal
 finite index subgroup $\Lambda$,
 
  (2) $\sX$ admits a meromorphic slope zero tensor $0 \neq \psi_X$
 ( a meromorphic section of  $S^{mn} (\Omega^1_X (log D) )  ( - m (K_X + D) )$) 
 which is of orbifold type, 

  (3) $ K_{\sX} := K_X +\sum_i \frac{m_i-1}{m_i}  D_i$ is ample,

 and  
 
 (i) the corresponding Galois covering 
 $ Y \ra X= X/G$ ($G : = \Ga / \Lambda$) is smooth, 
 equivalently, the orbifold universal cover of $X$  is smooth, or
 
 (i') $Y$ has singularities which are $2$-homologically connected,
 that is, they have a resolution of singularities $ \pi : Y' \ra Y$ with
 $\sR^j f_* (\ZZ_{Y'})=0, \ {\rm for} \ j=1,2.$

Moreover, $\sX$ should be an  orbifold classifying space.

 \end{theorem}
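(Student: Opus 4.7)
The overall plan is to reduce the theorem to the smooth locally symmetric case of \cite{CaFr}, \cite{CaDS}, \cite{CaDS2} by passing to the torsion-free Galois cover $p\colon Y \to X$ provided by (1), and to control the descent of the characterizing slope-zero tensor through $p$. The key bookkeeping is that the orbifold-type vanishing conditions on $\psi_X$ are designed precisely so that $p^{*}\psi_X$ extends holomorphically across the ramification divisor: on a component $D_i$ of multiplicity $m_i$, pulling back a section of $\Omega^1_X(\log D_i)$ by a cyclic degree-$m_i$ cover adds no new pole, while the fractional contribution $\tfrac{m_i-1}{m_i}D_i$ to $K_\sX$ is compensated exactly by the ramification formula $K_Y = p^{*}K_\sX$.

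Forward direction. Assume $\sX = \sD/\Ga$ with $\sD$ of tube type. By Selberg's Lemma there is a torsion-free $\Lambda \triangleleft \Ga$ of finite index, and $Y = \sD/\Lambda$ is a smooth compact locally symmetric manifold of tube type, so (i) holds. The result of \cite{CaFr}, \cite{CaDS} supplies a canonical nonzero slope-zero tensor $\psi_Y \in H^0(Y, S^{mn}\Omega^1_Y(-mK_Y))$; its construction being intrinsic, $\psi_Y$ is $G$-invariant, and therefore descends to a meromorphic $\psi_X$ on $X$ of the required type. The orbifold-type vanishing orders along each $D_i$ are then read off Riemann--Hurwitz. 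Ampleness of $K_\sX$ follows from ampleness of $K_Y$, available by \cite{Kodaira}, together with $G$-invariance.

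Converse direction under (i). Hypothesis (2), in view of the identification above, makes $p^{*}\psi_X$ extend to a nonzero holomorphic $\psi_Y \in H^0(Y, S^{mn}\Omega^1_Y(-mK_Y))$. Since $p$ is finite and $K_\sX$ is ample, $K_Y = p^{*}K_\sX$ is ample, so the characterization of \cite{CaDS}, \cite{CaDS2} applies on $Y$ and yields $Y \cong \sD/\Lambda$ with $\sD$ a product of irreducible tube-type bounded symmetric domains. The finite group $G=\Ga/\Lambda$ acts on $Y$ by biholomorphisms; each such biholomorphism lifts, modulo $\Lambda$, to an element of $\Aut(\sD)$, and assembling these lifts with $\Lambda$ into a group extension reconstructs $\Ga$ acting properly discontinuously on $\sD$ with $\sX = \sD/\Ga$ as orbifolds. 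Since $\sD$ is contractible, it is the orbifold universal cover of $\sX$, so $\sX$ is an orbifold $K(\Ga,1)$, i.e.\ an orbifold classifying space.

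Handling (i') and anticipated obstacles. Under (i'), one runs the argument on an equivariant resolution $\pi\colon Y' \to Y$: the assumption $\sR^j \pi_{*}\ZZ_{Y'} = 0$ for $j=1,2$ yields, via Leray, $\pi_1(Y') \cong \pi_1(Y) \cong \Lambda$ and $H^2(Y',\ZZ) \cong H^2(Y,\ZZ)$, so one transports the K\"ahler class and the lifted tensor to $Y'$, concludes $Y' \cong \sD/\Lambda$, and then contracts back, observing that since $\sD/\Lambda$ is smooth the exceptional locus must be trivial. The main technical obstacle I expect is the precise definition and handling of \emph{orbifold type}: pinning down the exact logarithmic vanishing orders of $\psi_X$ along the $D_i$ so that, on the one hand, $p^{*}\psi_X$ extends holomorphically without zeros along the ramification divisor, and on the other hand $G$-invariance of a holomorphic $\psi_Y$ forces precisely those vanishings on $\sX$. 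A secondary subtlety in case (i') is equivariantly lifting the $G$-action through the resolution and verifying that the reconstructed $\sD$ descends to a universal-cover datum on the possibly singular $Y$.
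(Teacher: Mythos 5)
Your forward direction and your treatment of case (i) follow the paper's route: pass to $Y=\sD/\Lambda$ via Selberg's Lemma, descend/lift the slope zero tensor through the finite cover $p:Y\to X$, use $K_Y=p^*(K_{\sX})$ ample, and invoke the characterization of smooth locally symmetric manifolds of tube type (Theorem \ref{manifold}). The local computation you defer as ``the main technical obstacle'' (the exact vanishing orders making $p^*\psi_X$ holomorphic) is exactly what the paper's Lemma \ref{descend} carries out, so flagging it without doing it is a real but acknowledged omission rather than a wrong turn.

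There is, however, a genuine gap in your handling of case (i'). You propose to transport the K\"ahler class and the lifted tensor to a resolution $\alpha: Y'\to Y$, conclude $Y'\cong \sD/\Lambda$, and ``contract back.'' This cannot work as stated: if $\alpha$ contracts any curve $C$, then every class pulled back from $Y$ is numerically trivial on $C$, so the transported class is at best nef and never ample, and $K_{Y'}$ itself differs from $\alpha^*K_Y$ by exceptional discrepancies and is not ample either. Hence the hypothesis ``$K$ ample'' of Theorem \ref{manifold} fails on $Y'$ precisely when there is something to prove, and deducing triviality of the exceptional locus from $Y'\cong\sD/\Lambda$ is circular. The paper's argument is different and is the missing idea: the vanishing $\sR^j\alpha_*(\ZZ_{Y'})=0$ for $j=1,2$ gives $H^2(Y',\QQ)\cong H^2(Y,\QQ)$, so if $C$ were a contracted curve its class $c\in H_2(Y',\QQ)$ would be orthogonal, by the projection formula, to all of $H^2(Y',\QQ)=\alpha^*H^2(Y,\QQ)$ (since $\alpha_*c=0$); this contradicts projectivity of $Y'$, because an ample class must pair positively with $c$. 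Therefore $\alpha$ is finite, hence (being birational onto a normal target) an isomorphism, so $Y$ is already smooth and one is reduced to case (i). You need to replace your ``run the argument on $Y'$'' step by this direct proof that the resolution is trivial.
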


 Our  main result is instead:
 
 \begin{theorem}\label{main}
 The global compact complex orbifolds $\sX$ of bounded symmetric  domains $\sD$
 of tube type (i.e., $\sD$ is a product of irreducible bounded symmetric domains of tube type) 
   are the projective complex orbifolds such that:  
   
   (1) their  orbifold fundamental group $\Ga$ admits a torsion free normal
 finite index subgroup $\Lambda$,
 
  (2) $\sX$ admits a meromorphic slope zero tensor $0 \neq \psi_X$
 ( a meromorphic section of  $S^{mn} (\Omega^1_X (log D) )  ( - m (K_X + D) )$) 
 of orbifold type, 

  (3)  $ K_{\sX} := K_X +\sum_i \frac{m_i-1}{m_i}  D_i$ is ample,

 and 
  
  (ii) $\sX$ is a  Deligne-Mostow complex projective orbifold, 
   and $\sX$ is an  orbifold classifying space.
 
 \end{theorem}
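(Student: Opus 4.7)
The plan is to deduce Theorem~\ref{main} from Theorem~\ref{easier} by observing that hypothesis (ii) (Deligne--Mostow), together with (1), already implies the smoothness alternative (i) of Theorem~\ref{easier}. For the necessity direction, assume $\sX = \sD/\Ga$ with $\sD$ a product of irreducible tube-type bounded symmetric domains. Selberg's Lemma, applied to $\Ga$ as a discrete subgroup of the real Lie group $\Aut(\sD)$, yields a torsion-free finite-index normal subgroup $\Lambda \triangleleft \Ga$, giving (1). On the smooth quotient $Y = \sD/\Lambda$ the tube-type hypothesis produces a non-vanishing holomorphic slope zero tensor $\psi_Y \in H^0(S^{mn}(\Omega^1_Y)(-mK_Y))$ by \cite{CaFr, CaDS}, and its image under the Galois cover $Y \to X$ descends to a meromorphic section $\psi_X$ whose ``orbifold type'' vanishing conditions encode precisely the fact that its lift $\psi_Y$ is regular on $Y$; this yields (2). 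Ampleness of $K_\sX$ follows from that of $K_Y$ via \cite{Kodaira}, giving (3). Because $\sD$ is smooth and $\Ga$ acts properly discontinuously with finite stabilizers, $\sX$ is locally of the form $U/H$ with $U$ smooth and $H$ finite, i.e.\ Deligne--Mostow, giving (ii); and since $\sD$ is contractible, $\sX$ is an orbifold classifying space.

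\textbf{Sufficiency.} Conversely, assume $\sX$ satisfies (1), (2), (3), (ii) and is an orbifold classifying space. Property (ii) says that $\sX$ is locally modelled on $U/H$ with $U$ a smooth complex manifold and $H$ a finite group; thus its orbifold universal cover $\widetilde{\sX}$ exists as a connected smooth complex manifold on which $\Ga = \pi_1^{\mathrm{orb}}(\sX)$ acts properly discontinuously with only finite stabilizers. Since $\Lambda$ from (1) is torsion-free, it intersects every such finite stabilizer trivially and hence acts freely on $\widetilde{\sX}$; consequently $Y := \widetilde{\sX}/\Lambda$ is a smooth compact complex manifold, which verifies hypothesis (i) of Theorem~\ref{easier}. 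The remaining hypotheses (1), (2), (3) and the orbifold classifying space property carry over verbatim, so Theorem~\ref{easier} applies and yields a biholomorphism $Y \cong \sD/\Lambda$ with $\sD$ a tube-type bounded symmetric domain; passing to the orbifold quotient by $G = \Ga/\Lambda$ identifies $\sX = \sD/\Ga$ as claimed.

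\textbf{Expected main obstacle.} The bulk of the analytic work sits inside Theorem~\ref{easier}: one must lift $\psi_X$ to a holomorphic slope zero tensor on the smooth cover $Y$ (which is exactly what the ``orbifold type'' condition on $\psi_X$ is designed to guarantee) and then invoke the tube-type characterization of \cite{CaFr, CaDS}. Within the present reduction the only delicate point is to match, at the end of the argument, the orbifold structure on $\sD/\Ga$ coming from the $\Ga$-action with the originally prescribed orbifold structure on $\sX$; this is where the orbifold classifying space hypothesis is essential, since it forces the orbifold universal cover produced by the Deligne--Mostow structure to coincide with $\sD$ and ensures that the ramification multiplicities of the covering $\sD \to X$ reproduce exactly the marks $m_i$ on the divisors $D_i$.
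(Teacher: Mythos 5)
Your necessity argument is essentially the paper's (Selberg's Lemma, descent of $\psi_Y$, Cartan's Lemma, contractibility of $\sD$), but the sufficiency direction has a genuine gap at its central step. You assert that because $\sX$ is Deligne--Mostow (locally $U/H$ with $U$ smooth and $H$ finite), ``its orbifold universal cover $\widetilde{\sX}$ exists as a connected smooth complex manifold'' on which $\Ga$ acts properly discontinuously, whence $Y=\widetilde{\sX}/\Lambda$ is smooth and Theorem \ref{easier}(i) applies. This does not follow. The Deligne--Mostow condition is purely local; the orbifold covering of $\sX$ associated to a subgroup of $\Ga=\pi_1^{orb}(\sX)$ is obtained by completing the corresponding covering of $X\setminus D$, and it is smooth over a quotient-singular point of $X$ only if the local uniformizing group injects into the global deck group --- which is exactly the ``developability'' statement one is trying to prove, not a consequence of the local structure (weighted projective spaces, or the teardrop in the real category, are locally finite quotients of smooth spaces yet are not global quotients of manifolds by discrete groups). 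Torsion-freeness of $\Lambda$ removes the divisorial ramification (this is the paper's Lemma preceding the proofs), but says nothing about singularities in codimension $\geq 2$; and the orbifold classifying space hypothesis constrains the homotopy type of the universal cover, not its smoothness. So all that (ii) yields is what the paper actually states: $Y$ is a normal projective variety with quotient (hence rational) singularities, and hypothesis (i) of Theorem \ref{easier} is not verified.

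This is precisely where the paper's real work lies. It extends the argument of Theorem \ref{manifold} to the possibly singular $Y$ by producing a K\"ahler--Einstein orbifold metric with ample $K_Y$ (\cite{R-Koba}, \cite{casciniLN}, \cite{Eys}), applying the Bochner principle (\cite{Koba80}) to make the slope zero tensor parallel, invoking the De Rham decomposition of the smooth part of the orbifold universal cover (Prop.\ 5.4 of \cite{campana}) together with Berger's holonomy theorem to identify the holonomy of each factor with that of an irreducible bounded symmetric domain, and then using completeness and Hartogs extension to show each completed factor is biholomorphic to such a domain; only at the very end does smoothness of $Y$ follow, after which Theorem \ref{manifold} is invoked. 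Your reduction to Theorem \ref{easier}(i) skips all of this, so the proof as written does not go through.
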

 
One may speculate/ask whether condition (ii) may be replaced by the weaker assumption that $X$ has KLT singularities.

  \section{Complex orbifolds, Deligne-Mostow orbifolds, orbifold fundamental groups, orbifold coverings}

This section is an abridged version of the corresponding section of \cite{cat24}, so we shall be here
quicker in the exposition.

\begin{defin} (compare 5.5 in \cite{cime}, and section 4 of \cite{d-m})

 Let  $Z$ be a normal complex space, let $D$ be a closed analytic set
of $Z$ containing $Sing(Z)$, and  let $\{ D_i| i \in \sI \}$ be the irreducible components of $D$ of codimension $1$.

(1) Attaching  to each $D_i$ a positive integer $m_i \geq 1$, we obtain a {\bf complex orbifold} $(Z,D, \{ m_i|  i \in \sI \})$.

(2) The {\bf orbifold fundamental group} $\pi_1^{orb} (Z \setminus D, (m_1, \dots, m_r , \dots ))$ is defined as the quotient 
$$ \pi_1^{orb} (Z \setminus D, (m_1, \dots, m_r, \dots ) ) : = \pi_1 (Z \setminus D) / \langle \langle(\ga_1^{m_1}, \dots, \ga_r^{m_r} , \dots \rangle \rangle$$

of the fundamental group of $(Z \setminus D)$ by the subgroup normally generated by simple geometric loops $\ga_i$ going each around a smooth point of the divisor $D_i$ (and counterclockwise). 

(3) The orbifold is said to be {\bf quasi-smooth} or geometric  if moreover $D_i$ is smooth outside of $Sing(Z)$.

(4) The orbifold is said to be a {\bf Deligne-Mostow orbifold} if moreover for each point $z \in Z$
there exists a local chart $\phi : \Omega  \ra U = \Omega/G$, where $0 \in \Omega \subset \CC^n$,
$G$ is a finite subgroup of $GL(n, \CC)$, $\phi(0) = z$, $U$ is an open neighbourhood of $z$, and
the orbifold structure is induced by the quotient map. That is, $D \cap U$ is the branch locus of $\Phi$,
and the integers $m_i$ are the ramification multiplicities. 

(5) An orbifold is said to be {\bf reduced} (or {\bf impure}) if all the multiplicites $m_i =  1$.

(6) We identify two orbifolds under the equivalence relation generated by forgetting the divisors $D_i$ with multiplicity $1$.

\end{defin}

\begin{remark}
(i) A D-M (= Deligne-Mostow) orbifold is quasi-smooth, and the underlying complex space $Z$ has only quotient singularities (these are rational singularities).

(ii) In the case where we have a {\bf reduced} orbifold, that is,  there is no divisorial part,  then the orbifold fundamental group
is the fundamental group of $Z \setminus Sing (Z)$.

(iii) If $Z = M / \Ga$ is the quotient of a complex manifold by a properly discontinuous subgroup $\Ga$, then
$Z$ is a D-M orbifold, since any stabilizer subgroup is finite ($\Ga$ being properly discontinuous) and by Cartan's lemma (\cite{cartan}) the action of the stabilizer subgroup
becomes linear after a local change of coordinates.

(iv) one could  more generally consider a wider class of orbifolds allowing  also the multiplicity $m_i = \infty$: this means that the relation $\ga_i^{m_i}=1$
is a void condition; this more general case is  useful to deal with the compactifications of finite volume quotients $X = \sD / \Ga$ (see for instance \cite{AMRT}).

\end{remark}

Now, to  a subgroup of the orbifold fundamental group corresponds 
a connected  {\bf orbifold covering} of orbifolds,  (see for instance  \cite{d-m}), 
in particular 
to the trivial subgroup corresponds the orbifold universal cover 
$$( \tilde{Z}, \tilde{D}, \{ \tilde{m}_j \}).$$

\begin{defin}
We say that an  orbifold $(Z, D,  (m_j))$ is an orbifold classifying space if its 
universal covering $( \tilde{Z}, \tilde{D}, \{ \tilde{m}_j \})$
satisfies the properties 

(a) either $\tilde{Z}$ is contractible and the multiplicities $\tilde{m}_j $ are all equal to $1$, or

(b) there is a homotopy retraction of $\tilde{Z}$ to a point which preserves the subdivisor $\tilde{D}'$
consisting of the irreducible components with multiplicity $\tilde{m}_j >1$.
\end{defin} 

\begin{defin}
The orbifold canonical divisor is defined as 

 $$ K_{\sX} := K_X + \hat{D} := K_X +\sum_i \frac{m_i-1}{m_i}  D_i.$$
 
 It satisfies the property that, for an orbifold covering $f : \sY \ra \sX$, we have
 $$ f^*(K_{\sX} ) = K_{\sY} .$$

\end{defin} 

\section{Locally symmetric manifolds of tube type and descent of  slope zero tensors}

As mentioned in the introduction,  a bounded symmetric domain 
 $\sD$ is of tube type if  $\sD$ is biholomorphic to a tube domain 
$\sT = V + i \sC$, where $V$ is a real vector space and $ \sC \subset V$ is an open self-dual convex cone containing no lines.

Recall the notation for the classical domains:

\begin{itemize}
\item
   $I_{n,p} $ is the domain $ \sD = \{ Z \in M_{n,p}(\mathbb{C}) :
\mathrm{I}_p - ^tZ \cdot \overline{Z} > 0 \}$.\\
\item
   $II_{n} $ is the intersection of the domain  $I_{n,n} $ with the
subspace of skew symmetric matrices.
   \item
   $III_{n} $ is  the intersection of the domain  $I_{n,n} $
with the subspace of  symmetric matrices.
\item
$IV_{n} $ is the Lie Ball in $\CC^n$,
$$ \{ z | | z_1^2 + \dots + z_n^2| < 1, \ 1 + | z_1^2 + \dots + z_n^2|^2 - 2 (  |z_1|^2 + \dots + |z_n|^2 ) >0$$
\end{itemize}

There are moreover the exceptional domains $\sD_{16}$ of dimension $16$ and 
$\sD_{27}$ of dimension $27$ (related to $2 \times 2$ and $3 \times 3$ matrices over  the Cayley algebra). 

The tube  domain condition excludes the domains $I_{n,p} $ with $ n \neq p$.

The following result was shown in \cite{CaDS}, based on the concept of a slope zero tensor mentioned in
 the introduction, see \eqref{szt}:
$$
 0 \neq \psi_Y \in H^0 ( S^{mn} (\Omega^1_Y) ( - m K_Y )).
$$

\begin{theorem}\label{manifold}

Let $X$ be a compact complex manifold of dimension $n$. Then the
following two conditions:

\begin{itemize}

\item[(1)] $K_X$ is ample
\item[(2)] $X$ admits a nontrivial slope zero   tensor $\psi_X \in
H^0(S^{mn}(\Omega^1_X)(-m K_X) )$, (here $m$ is a positive
integer);

\end{itemize} hold if and only if $X \cong \Omega / \Gamma$ , where
$\Omega$ is a bounded symmetric domain of tube type and $\Gamma$ is a
cocompact
discrete subgroup of
$\mathrm{Aut}(\Omega)$ acting freely.

Moreover, the degrees and the multiplicities of the irreducible
factors of the polynomial $\psi_p$ (the evaluation of $\psi$ at a point $p$) determine uniquely the universal
covering
$\widetilde{X}=\Omega$.

In particular, for $m=2$, we get that  the universal covering
$\widetilde{X}$ is a polydisk if and only if $\psi_p$  is the square of
a squarefree polynomial (indeed, of a product of linear forms).

\end{theorem}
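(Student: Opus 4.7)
The plan is to prove both implications of Theorem~\ref{manifold}; the construction of $\psi_X$ from a tube-domain quotient is explicit, while the converse is the substantive direction. For the \textbf{necessity} part, I would realize $\sD \cong V + i\sC$ with $V$ a Euclidean Jordan algebra (or product of such) of total rank $r$ and $\sC$ the cone of squares, and take $N : V_{\CC} \to \CC$ the Jordan norm, a homogeneous polynomial of degree $r$. Under $\Aut(\sC)^{0}$, $N$ transforms by a character whose $(n/r)$-th power matches the one governing the transformation of $K_{\sD}$, so $N(dz)^{m(n/r)}$, viewed as a section of $S^{mn}\Omega^1_{\sD} \otimes (-m K_{\sD})$, is $\Aut(\sD)$-invariant and descends to the desired $\psi_X$ on $X = \sD/\Ga$. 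Ampleness of $K_X$ follows from cocompactness and the Bergman-metric Einstein structure.

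For the \textbf{sufficiency}, Step~1 is to apply Aubin-Yau to obtain the unique K\"ahler-Einstein metric $\om$ on $X$ with $\mathrm{Ric}(\om) = -\om$, and then run a Bochner argument on the holomorphic tensor $\psi$. Computing $\Delta |\psi|^2_{\om}$ produces a nonnegative gradient term $|\nabla \psi|^2$ plus a curvature term linear in $|\psi|^2$, the latter being the $\om$-contraction of the curvature of $S^{mn}\Omega^1_X \otimes (-mK_X)$. The K\"ahler-Einstein condition expresses this curvature purely through $\om$ and $K_X$, and the slope-zero twist by $-mK_X$ is precisely calibrated so that the contraction vanishes identically. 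Hence $\Delta |\psi|^2_{\om} \geq 0$, so $|\psi|^2_{\om}$ is constant on the compact $X$, forcing $\psi$ to be nowhere zero and $\nabla\psi = 0$ for the Chern connection.

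Step~2 is the \textbf{holonomy reduction}. Pulling $\psi$ and $\om$ back to the universal cover $\widetilde X$, the value $\widetilde\psi_p$ at $p \in \widetilde X$ is a nonzero homogeneous polynomial of degree $mn$ on $T_p\widetilde X$ invariant under the restricted holonomy group $H$. I would then invoke the holomorphic de~Rham decomposition of $(\widetilde X, \om)$ into irreducible factors and Berger's classification of K\"ahler holonomies on a complete manifold with negative Einstein-Ricci: the existence of a nontrivial $H$-invariant holomorphic polynomial of the precise degree $mn_i$ on each factor forces each irreducible factor to be an irreducible Hermitian symmetric space of noncompact type. A further invariant-theoretic observation identifies such a polynomial invariant of the isotropy representation with (a power of) the Jordan norm, which exists exactly when the factor is of tube type; this rules out the non-tube irreducible factors $I_{n,p}$ with $n \neq p$. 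Cocompactness of $\Ga$ then concludes $X = \sD'/\Ga$ with $\sD'$ a product of irreducible tube domains.

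For the final \textbf{uniqueness and polydisk} statement, the irreducible polynomial factorization of $\widetilde\psi_p$ reads off the Jordan-norm factors of each irreducible component of $\widetilde X$, so the degrees and multiplicities of the irreducible factors of $\psi_p$ reconstruct $\widetilde X$ up to biholomorphism; in particular, for $m=2$, squarefreeness of the underlying polynomial forces each Jordan norm to be linear, so each factor is a disk $I_{1,1}$ and $\widetilde X$ is a polydisk. The main obstacle is the holonomy step: upgrading the parallelism of a single slope-zero polynomial tensor into a full recognition of $\widetilde X$, specifically excluding the non-symmetric Berger holonomies compatible with the polynomial invariant, and extracting the Jordan algebra structure from the polynomial shape of $\widetilde\psi_p$. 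The rest of the argument (Aubin-Yau plus the Bochner identity calibrated by the slope-zero bookkeeping) is robust once this representation-theoretic hurdle is cleared.
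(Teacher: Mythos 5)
This theorem is not proved in the paper at all: it is quoted verbatim from the reference \cite{CaDS} (Catanese--Di Scala, \emph{Math.\ Ann.} 2012), so there is no in-paper argument to compare against. Your outline reproduces the strategy of that cited proof essentially step for step --- Aubin--Yau to get the K\"ahler--Einstein metric, Kobayashi's Bochner principle to make the slope-zero tensor parallel, the de Rham decomposition and Berger's holonomy theorem to force each irreducible factor to be Hermitian symmetric of noncompact type (with the generic/Jordan norm characterizing the tube-type factors and yielding the converse direction) --- and you correctly single out the genuine crux, namely excluding full unitary holonomy and splitting the invariant polynomial across the de Rham factors, which is exactly the representation-theoretic content of \cite{CaDS}.
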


Now, if $X$ is a global orbifold of a symmetric bounded domain $\sD$ of tube type, associated to a properly discontinuous subgroup $\Ga$ of biholomorphisms of $\sD$, then by Selberg's Lemma
the group $\Ga$ admits a finite index normal subgroup $\Lambda$ and the compact complex manifold
$ Y : = \sD / \Lambda$ fulfills  conditions (1) and (2) of Theorem \ref{manifold},
in particular it admits a nontrivial slope zero tensor $\psi_Y$.

Since $ X = Y / G$, $ G = \Ga / \Lambda$,  we want to show that
$\psi_Y$, which is clearly $G$- invariant, descends to some meromorphic tensor $\psi^*_X$ on the smooth locus
$X^*$ of $X$. Then we define 

$$ \psi_X : = i_* ( \psi^*_X), \ \
{\rm for }  \ i : X^* \ra X  \ {\rm being \ the \ inclusion}.$$

In order to achieve our  goal, let us consider now the following local situation.

\begin{lemma}\label{descend}
Consider the action of the cyclic group $\mu_q$ of roots of unity at the origin in $Y : = \CC^n$, via the action,
for $\zeta \in \mu_q$:
$$ (x,z) : = (x_1, \dots, x_{n-1}, z) \mapsto (x, \zeta z), $$
with quotient map $\pi : Y \ra X$,
$$\pi : (x,z) : = (x_1, \dots, x_{n-1}, z) \mapsto (x, y), \ y : = z^q.$$

Then a  $G$-invariant slope zero holomorphic tensor $$ 0 \neq \psi \in H^0 ( S^{mn} (\Omega^1_Y  )  ( - m K_Y  ))$$
descends to a    tensor $ 0 \neq \phi$, a meromorphic section of (here $D = div(y)$)
$$ S^{mn} (\Omega^1_X (log D  ))  ( - m (K_X +   D) ).$$

\end{lemma}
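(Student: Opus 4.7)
The plan is to carry out the descent by a direct local coordinate computation combined with the basic invariant-theoretic analysis of the cyclic action on the tensor bundle. Since both $Y$ and $X = Y/\mu_q$ are copies of $\CC^n$ (with $X$ parameterised by $(x,y)=(x, z^q)$), and the only branching of $\pi$ takes place along the reduced divisor $\{z=0\}$, everything reduces to understanding how $\pi^*$ acts on each monomial of $\psi$ and how the $\mu_q$-invariance constrains the expansion of its coefficients in powers of $z$.

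First I would record the basic ramification data. From $\pi^*dy = qz^{q-1}\,dz$, hence $\pi^*(dy/y) = q\,dz/z$, together with $\pi^*\omega_X = qz^{q-1}\omega_Y$ for the standard trivialising volume forms $\omega_X$, $\omega_Y$, one gets $\pi^*K_X = K_Y - (q-1)[z=0]$, $\pi^*D = q[z=0]$ and therefore
\[
\pi^*(K_X+D) = K_Y + [z=0].
\]
Writing a trial meromorphic section of $S^{mn}(\Omega^1_X(\log D))(-m(K_X+D))$ in the form
\[
\phi = \sum_{|I|+k=mn} a_{I,k}(x,y)\,(dx)^I(dy/y)^k \otimes \frac{y^m}{\omega_X^m},
\]
and $\psi = \sum_{|I|+k=mn} b_{I,k}(x,z)\,(dx)^I(dz)^k\big/\omega_Y^m$ on $Y$, a short calculation gives
\[
\pi^*\phi = \sum_{|I|+k=mn} q^{k-m}\,a_{I,k}(x,z^q)\,z^{m-k}\,(dx)^I(dz)^k\big/\omega_Y^m,
\]
so that $\pi^*\phi = \psi$ reduces to the coefficient identity $b_{I,k}(x,z) = q^{k-m}\,z^{m-k}\,a_{I,k}(x,z^q)$.

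Next I would exploit the $\mu_q$-invariance of $\psi$. Under $z \mapsto \zeta z$ the monomial $(dx)^I(dz)^k/\omega_Y^m$ picks up the weight $\zeta^{k-m}$, so invariance forces $b_{I,k}(x,\zeta z) = \zeta^{m-k}\,b_{I,k}(x,z)$ for every $\zeta\in\mu_q$. Expanding $b_{I,k}$ as a power series in $z$, only those monomials of $z$-degree $\equiv m-k \pmod q$ survive; equivalently,
\[
b_{I,k}(x,z) = z^{j_0}\,\tilde b_{I,k}(x,z^q),\qquad j_0:=(m-k)\bmod q\in\{0,\dots,q-1\},
\]
for some holomorphic $\tilde b_{I,k}$. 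The congruence $k-m+j_0\equiv 0 \pmod q$ is then automatic; writing $k-m+j_0=qs$ with $s\in\ZZ$, the coefficient identity becomes
\[
a_{I,k}(x,y) = q^{m-k}\,y^{s}\,\tilde b_{I,k}(x,y),
\]
which is meromorphic on $X$ (holomorphic when $s\ge 0$, with a pole of order $|s|$ along $D$ when $s<0$). This defines $\phi$ unambiguously as a meromorphic section of $S^{mn}(\Omega^1_X(\log D))(-m(K_X+D))$ with $\pi^*\phi=\psi$, and $\phi\neq 0$ follows because $\pi^*$ is injective.

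The main work is really a bookkeeping check that the twist $-m(K_X+D)$ together with the logarithmic pole of $dy/y$ absorbs the ramification contribution exactly once: the identities $\pi^*(K_X+D) = K_Y+[z=0]$ and $\pi^*(dy/y) = q\,dz/z$ combine to produce the factor $z^{m-k}$ in $\pi^*\phi$, which modulo the $\mu_q$-weight of the monomial $(dx)^I(dz)^k/\omega_Y^m$ is precisely what is needed for descent. No global hypothesis on $Y$ or $X$ enters: the argument is purely local along the branch divisor.
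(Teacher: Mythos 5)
Your proposal is correct and follows essentially the same route as the paper: a local coordinate computation in which the coefficients are expanded in powers of $z$, the $\mu_q$-weight of the monomial $(dx)^I dz^k/\omega_Y^m$ forces the congruence $h\equiv m-k \pmod q$ on the surviving $z$-powers, and the substitution $dz/z = \pi^*(dy/y)/q$ (equivalently $\pi^*(K_X+D)=K_Y+[z=0]$) produces exactly the factor $q^{k-m}z^{m-k}$ needed for the invariant part to be expressible in $y=z^q$ and $dy/y$. The only difference is cosmetic: the paper rewrites $\psi$ upstairs in terms of $d(\log y)$ and reads off the descended tensor, whereas you posit a general $\phi$ downstairs and solve $\pi^*\phi=\psi$ for its coefficients.
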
 

\begin{proof}
Write $\psi$ as (here $ r  + |I| =  mn$)

$$ \sum_{r=0}^{mn} \sum_{|I| = mn-r} A_{I} (x,z) 
\frac{(dx)^{I} dz^r}{ (dx_1 \wedge \dots \wedge dx_{n-1} \wedge dz)^m} =$$
$$=
\sum_{r=0}^{mn} \sum_{I,h}  B_{I,h} (x) z^h 
\frac{(dx)^{I} dz^r}{ (dx_1 \wedge \dots \wedge dx_{n-1} \wedge dz)^m}=$$
$$ =  \sum_{r=0}^{mn} \sum_{I,h}  B_{I,h} (x)  q^{-r+m} z^{h+r-m}
\frac{(dx)^{I} d(logy)^r}{ (dx_1 \wedge \dots \wedge dx_{n-1} \wedge d(logy))^m}.$$
$G$-invariance is equivalent to the condition that $q$ divides $h+r-m$, hence $h = bq +m -r$,
and we have, downstairs on $X$,  the tensor
$$ \phi : = \sum_{r=0}^{mn}  \sum_{I,h}  q^{-r+m} B_{I,h} (x)  \  y^{b}
\frac{(dx)^{I} d(logy)^r}{ (dx_1 \wedge \dots \wedge dx_{n-1} \wedge d(logy))^m}.$$
The holomorphic part of the tensor $\phi$  is the sum of the series where  $ b\geq 0$, that is,
corresponding to the terms with 
$$ h + r \geq m   .$$
Its order of pole on $D$ is at most $ - b = [\frac{m } {q}]$.

\end{proof}

\begin{defin}
We shall say that the meromorphic tensor $\phi$  is of {\bf orbifold type} if conversely its pull back 
$\psi$ to any orbifold covering with multiplicities $\leq 1$ is holomorphic:
this means that we only have terms with $b$ such that $bq + m  \geq r$.
\end{defin}

\begin{remark}
(i) Since in the case of a good global orbifold quotient the slope tensor of $Y$ cannot vanish on a divisor, the case $h=0$ must occur, hence 
it follows that $q | (m-r)$ if we have a nonzero term with $h=0$ and given $ r$.

(ii) In the case of a polydisk $\sD$, we have semispecial tensors (see \cite{CaDS}), hence we may assume that $m=1$
in the previous Lemma, and it follows that 
 these descend to the quotient as holomorphic tensors.

\end{remark}

 \section{Proof of the Main Theorems}
 
We begin with some general observations, valid also for the speculation made in the introduction.

First of all we concentrate especially on the necessity parts of the statements; observe  that $\sX = Y/G$ is a Deligne-Mostow orbifold and the  singularities of $X$ are quotient singularities,
since, at any point $ y \in Y$ having a nontrivial stabilizer $G_y < G$, the group $G_y$ acts linearly by Cartan's Lemma \cite{cartan}. Since $\sD$ is contractible, $\sX$ is an orbifold classifying space.

By \cite{k-m} (Prop. 5.15, page 158)  quotient singularities $(X,x)$ are rational singularities, that is, they are normal and, if $f : Z \ra X$ is a local resolution, 
then $\sR^if_* \hol_Z=0 $ for $i \geq 1$. They enjoy also the stronger property of being KLT (Kawamata Log Terminal)
singularities.

Indeed Prop. 5.22 of \cite{k-m} (where dlt=KLT if there is no boundary divisor $\De, \De'$) says that KLT singularities are rational singularities, while Prop. 5.20, page 160, says that if we have a finite morphism between normal varieties,
$ F : Y \ra X$, then  $X$ is KLT if and only if $Y$ is KLT).

Hence, conversely, if we start with a Deligne-Mostow  orbifold $X$, the corresponding finite covering $Y$ is Deligne-Mostow,
and if $X$ is KLT, then also $Y$ is KLT.

{\bf First important principle: in both cases (ii), (iii), the normal complex space $Y$ has rational singularities.}

Moreover,  $Y$ is projective if and only if  $X$ is projective (by averaging,  we can find on $Y$ a $G$-invariant very ample divisor).

 We pass now to  the converse implications.
 
 {\bf Key argument: we consider the orbifold covering $Y$ associated to the normal torsion free subgroup 
 $$ \Lam < \Ga : = \pi_1^{orb}(X),$$
 and we show that $Y$ is a locally symmetric manifold.}
 
 \begin{lemma}
 The orbifold $Y$ is just a normal complex space, that is,  there are no marked divisors with  multiplicity $m_i \geq 2$.
 \end{lemma}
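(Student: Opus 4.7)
The plan is to combine the orbifold Galois correspondence with the hypothesis that $\Lambda$ is torsion free. By construction the orbifold covering $Y \to \sX$ associated to the normal subgroup $\Lambda \triangleleft \Ga$ has orbifold fundamental group equal to $\Lambda$. Hence a nontrivial divisorial marking on $Y$ would have to give rise to a nontrivial torsion element of $\Lambda$, which is impossible.

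Concretely, I would argue by contradiction. Suppose some irreducible divisor $\tilde D \subset Y$ lying over a marked divisor $D_i \subset X$ carries orbifold multiplicity $\tilde m_i \geq 2$. The standard relation
$$ \tilde m_i \cdot e_i = m_i $$
between the multiplicities on $\sX$ and $Y$ and the ramification index $e_i$ of the covering $Y \to X$ along $\tilde D$ would then force $e_i < m_i$. Writing $\ga_i$ for a simple geometric loop around $D_i$ in $X \setminus D$, the integer $e_i$ is precisely the order of the image of $\ga_i$ in the finite quotient $G = \Ga / \Lambda$, so the assumption $\tilde m_i \geq 2$ is equivalent to $\ga_i^{e_i} \in \Lambda$ with $0 < e_i < m_i$.

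To finish I must show that $\ga_i$ has order exactly $m_i$ in $\Ga$ (not merely that its $m_i$-th power is trivial, which is built into the definition of the orbifold fundamental group). This is the only genuinely delicate point, and it is where the Deligne--Mostow assumption (ii) enters. Near a general point $p \in D_i$ there is a local uniformising chart $\phi : \Omega \to \Omega / H = U \ni p$ with $H \subset GL(n,\CC)$ finite and the branch divisor $D_i \cap U$ induced by a cyclic subgroup $\mu_{m_i} \subset H$ pointwise fixing the pull-back of $D_i$. In this chart $\ga_i$ lifts to a generator of $\mu_{m_i}$, so its class in $\pi_1^{orb}(U)$, and hence in $\pi_1^{orb}(\sX) = \Ga$, has order exactly $m_i$. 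Consequently $\ga_i^{e_i}$ has order $m_i/e_i \geq 2$ in $\Ga$, contradicting the torsion-freeness of $\Lambda$. Therefore $e_i = m_i$ and $\tilde m_i = 1$ for every component, so $Y$ carries no marked divisors of multiplicity $\geq 2$.

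The main obstacle, and really the only subtle step, is the verification that $\ga_i$ has order exactly $m_i$ in $\Ga$; the rest is a formal consequence of the orbifold Galois correspondence together with the torsion-freeness of $\Lambda$. The smooth alternative (i), or its homological weakening (i'), would give the same conclusion in an even more direct way, since there $Y$ is already smooth as a complex space and no divisorial multiplicities can appear at all.
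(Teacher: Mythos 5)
Your argument follows the same route as the paper's: the exact sequence $1 \ra \Lam \ra \Ga \ra G \ra 1$, the identification of the ramification index $e_i$ of $Y \ra X$ along a component over $D_i$ with the order of the image of $\ga_i$ in $G$, the relation $\tilde m_i \, e_i = m_i$, and torsion-freeness of $\Lam$ to exclude $e_i < m_i$. The paper states this in two lines: the generators $\ga_i$ have order $m_i$ in $\Ga$, hence their images in $G$ have order exactly $m_i$ (otherwise $\ga_i^{e_i}$ would be a nontrivial torsion element of $\Lam$), so the covering is totally ramified along each $D_i$ and the divisors upstairs are reduced. You correctly isolate the only nontrivial point, namely that $\ga_i$ has order \emph{exactly} $m_i$ in $\Ga$ and not merely order dividing $m_i$ (which is all that the presentation of $\pi_1^{orb}$ guarantees); the paper simply asserts this.

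However, your justification of that point does not work as written. You compute the order of $\ga_i$ in the local orbifold fundamental group $\pi_1^{orb}(U) \cong \mu_{m_i}$ of a Deligne--Mostow chart around a general point of $D_i$, and then transfer it to $\Ga$ via ``and hence in $\pi_1^{orb}(\sX)$''. The natural map $\pi_1^{orb}(U) \ra \pi_1^{orb}(\sX)$ is not injective in general: already for $\PP^1$ with a single marked point of multiplicity $m$ the global orbifold fundamental group is trivial while the local one is $\ZZ/m$. So an element of order $m_i$ locally can have strictly smaller order $d_i$ globally, in which case $e_i = d_i$ (by torsion-freeness) and the component upstairs would still carry multiplicity $m_i/d_i \geq 2$. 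What actually forces $d_i = m_i$ is a global hypothesis, most directly the requirement that $\sX$ be an orbifold classifying space in the sense of case (a) of the paper's definition: the orbifold universal cover carries the multiplicities $m_i/d_i$, and requiring these to equal $1$ gives $d_i = m_i$. With that substitution your contradiction argument closes and coincides with the paper's proof; as it stands, the local-chart step is the one genuine gap.
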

 
 \begin{proof}
 Consider the exact sequence
 $$ 1 \ra \Lam \ra \Ga \ra G \ra 1.$$
 
 Then the generators $\ga_i$ have finite order $m_i$, hence their image in $G$ has order exactly $m_i$, because
 $\Lam$ is torsion free.
 
 This means that the covering $ Y \ra X$ is ramified with multiplicity $m_i$ at the divisor $D_i$, and therefore its inverse image
 in $Y$ is a reduced divisor with multiplicity $1$.
 
 \end{proof}

 \subsection{Proof of Theorem \ref{easier}}
 
 Case (i):
 if we assume that $Y$ is smooth, then $Y$ admits a nontrivial slope zero tensor, and we may directly
 invoke Theorem \ref{manifold}, using that by (3) $\sX$ and $Y$ have ample canonical divisor,
 since $K_Y = \pi^* (K_{\sX})$.
 
 Similarly, if the universal covering is smooth, also $Y$ is smooth, because by assumption $\Lambda$
 is torsion free and the stabilizers are finite, whence $\Lambda$ acts freely.
 
 Case (i'): as in \cite{cat24} we prove that $Y$ must be smooth.
 
 Let $Y'$ be a resolution of $Y$. Since by assumption  $\sR^1 f_* (\ZZ_{Y'})=0 $ 
 (this is ture for instance if $Y$ has rational singularities) and 
 $\sR^2 f_* (\ZZ_{Y'})=0, $ we have an isomorphism
 $$ H^j(Y', \QQ) \cong H^j(Y, \QQ), j = 1,2.$$
 
 Hence  the degree $1$ morphism 
 $\al : Y' \ra Y$ yields an isomorphism of first and second cohomology groups.
 
 We follow a similar argument to the one used in \cite{nankai}, proof of Proposition 4.8: it suffices to show 
 that $\al$ is  finite, because then $\al$,  being finite and birational,  is  an isomorphism $ Y' \cong Y$ by normality,
 hence we have shown that $Y$ is smooth and we proceed as for case (i).

  Now, if  $\al$ 
  is not finite,  there is a curve  $C$ which is contracted 
  by $\al$, hence its homology class $c \in H_2(Y', \QQ)$ maps to zero in  $ H_2(Y, \QQ)$.
  And, since  $H^{2}(Y, \QQ) \cong H^{2}(Y', \QQ)$, the class $c$ of $C$, by the projection formula,  is orthogonal  to the whole
  of  $H^{2}(Y', \QQ)$, which is the pull back  of $H^{2}(Y, \QQ)$.

 This is a contradiction because, $Y'$ being projective, 
 the class $c$ of $C$ cannot be trivial.

 \subsection{Proof of Theorem \ref{main}}
 By (ii) 
  $X$ is a   Deligne-Mostow orbifold, hence also $Y$ is a Deligne-Mostow  orbifold, therefore $Y$  is a normal space with  quotient singularities (and these are rational singularities)\footnote{ Similarly, under assumption  (iii), $Y$ has KLT singularities,
which are also rational singularities.}.

  We need now to mimic the proof in the case where $Y$ is smooth, for instance the proof of  Theorem \ref{manifold},
extending it to the case of a normal space $Y$ with  quotient singularities \footnote{Pay attention:  the orbifolds of \cite{campana}
are  the D-M orbifolds  with all $m_i \leq 1$, that we call here of  reduced type!}.

The first ingredient is: the existence of a complete K\"ahler-Einstein metric on the orbifold $Y$ with ample $K_Y$.
 This was first proven in dimension $2$ in \cite{R-Koba} (see also  \cite{Siu87} for the techniques used) and
 was proven  later on in a more general situation in  \cite{casciniLN} and \cite{Eys}.

The second ingredient is  Proposition 5.4 of  \cite{campana}: take the orbifold universal covering $\tilde{Y}$ of $Y$, 
and let $Y'$ be its smooth part: then there exists a De Rham decomposition of Riemannian manifolds
$$ Y' = \prod_i Y'_i,$$
where the holonomy action on each factor is irreducible.

Now, the slope zero tensor is parallel for the Levi Civita connection (by the Bochner principle, since the slope is zero),
as proven by Shoschichi Kobayashi in \cite{Koba80},
hence all the factors have holonomy different from the Unitary group.

Since $K_Y$ is ample, there are no flat factors, and by the Theorem of Berger \cite{Berger} and Simons 
the holonomy of each factor is the holonomy of an irreducible bounded symmetric domain.

Now, the orbifold metric on $\tilde{Y}$ is complete, since the metric on the orbifold $Y$ is complete;
and for each $i$ we can take the completion $\tilde{Y}_i$,
hereby obtaining a decomposition for $\tilde{Y}$.

Now  $Y'_i \subset \tilde{Y}_i$  admits a holomorphic map $f'_i$ to an irreducible  bounded symmetric domain $\sD_i$:
by the Hartogs property, $f_i$ extends to a holomorphic map 
$$f_i : \tilde{Y}_i \ra \sD_i,$$
which is an isometry when restricted to $Y'_i$.

Because of  completeness, $f_i$ is surjective, and since $\tilde{Y}_i $ is normal, its singular locus has codimension $2$, 
hence $ f'_i : Y'_i \ra f'_i (Y'_i)$ must be an isomorphism as the target is simply connected, and we have a covering space.

Again by Hartogs, the inverse of $f_i$, defined on $f'_i (Y'_i)$, extends to yield an isomorphism;
hence $f_i$ is an isomorphism.

In particular, it follows that $Y$ is smooth, hence we only need now to invoke Theorem \ref{manifold}.

\subsection{Final remarks}
    \begin{remark}
  (a)   As already discussed, if $X$ has KLT singularities, by Prop. 5.20 of \cite{k-m} also $Y$ has KLT 
  singularities (these are also  rational singularities).
  In this case, we need again to find a K\"ahler Einstein metric on $Y$, and to use  the Bochner principle.
  
  (b) One may also ask whether one can replace the condition of KLT singularities for $X$ by the condition
  that $X$ has rational singularities, proving  then that also $Y$ has rational singularities.
  \end{remark}

 \bigskip


\end{document}